\newtheorem{corollary}{\bf Corollary}
\newtheorem{lemma}{\bf Lemma}
\newtheorem{remark}{Remark}
\newtheorem{theorem}{\bf Theorem}
\theoremstyle{definition}
\numberwithin{equation}{section}
\begin{document}

\title[four-dimensional compact Ricci solitons]{On Euler characteristic and Hitchin-Thorpe inequality for four-dimensional \\compact Ricci solitons}

\author{Xu Cheng}
\author{Ernani Ribeiro Jr}
\author{Detang Zhou}

\address[X. Cheng]{Instituto de Matem\'atica e Estat\'istica, Universidade Federal Fluminense - UFF, 24020-140, Niter\'oi - RJ, Brazil}
\email{xucheng@id.uff.br}

\address[E. Ribeiro Jr]{Departamento  de Matem\'atica, Universidade Federal do Cear\'a - UFC, Campus do Pici, 60455-760, Fortaleza - CE, Brazil}\email{ernani@mat.ufc.br}

\address[D. Zhou]{Instituto de Matem\'atica e Estat\'istica, Universidade Federal Fluminense - UFF, 24020-140, Niter\'oi - RJ, Brazil}
\email{zhoud@id.uff.br}

\thanks{X. Cheng and D. Zhou were partially supported by CNPq/Brazil and FAPERJ/Brazil.}

\thanks{E. Ribeiro was partially supported by CNPq/Brazil [Grant: 309663/2021-0], CAPES/Brazil and FUNCAP/Brazil [Grant: PS1-0186-00258.01.00/21].}

\keywords{Gradient Ricci soliton; four-manifolds; Ricci flow; Hitchin-Thorpe inequality} \subjclass[2020]{Primary 53C25, 53C20, 53E20}

\date{\today}

\begin{abstract}
In this article, we investigate the geometry of $4$-dimensional compact gradient Ricci solitons. We prove that, under an upper bound condition on the range of the potential function,   a $4$-dimensional compact gradient Ricci soliton must satisfy the classical Hitchin-Thorpe inequality.  In addition, some volume estimates are also obtained.
\end{abstract}

\maketitle
\section{Introduction}

A complete Riemannian metric $g$ on an $n$-dimensional smooth  manifold $M^n$ is called a {\it gradient Ricci soliton} if there exists a smooth function $f$ on $M^n$ such that the Ricci tensor $\textrm{Ric}$ of the metric $g$ satisfies the equation
\begin{equation}
\label{maineq}
\textrm{Ric}+ \textrm{Hess}\,f=\lambda g
\end{equation} for some constant $\lambda\in\Bbb{R}.$ Here, $\textrm{Hess}\,f$ denotes the Hessian of $f.$  A gradient Ricci soliton (\ref{maineq})  is called {\it shrinking},
 {\it  steady} or {\it expanding} if the real number $\lambda$ is positive, zero or negative, respectively. Ricci solitons  are self-similar solutions of the Ricci
flows. Moreover, since they  also arise as the  singularity models of  the Ricci flows (see  \cite{Hamilton1}, \cite{Perelman2}), it is  very important in  understanding  them.

 It was proved by Perelman \cite{Perelman0} that every compact Ricci soliton is a gradient Ricci soliton  (also see its proof  in \cite{ELM}).  Moreover, it is known that on a compact manifold $M^n,$ a gradient steady or expanding Ricci soliton is necessarily an Einstein metric (see \cite{MR1249376}).  On the other hand,  for real dimension $4,$ the first example of a compact non-Einstein gradient shrinking Ricci soliton was constructed in the 90’s by Koiso \cite{Ko} and Cao \cite{Cao1} on the compact complex surface $\Bbb{CP}^2\sharp (- \Bbb{CP}^2),$  where $(-\Bbb{CP}^2)$ denotes the complex projective space with the opposite orientation. Therefore, compact   non-Einstein Ricci solitons must be shrinking.  In dimension $n=2$, Hamilton \cite{Hamilton2} showed that any $2$-dimensional compact gradient shrinking Ricci soliton is isometric to a quotient of the sphere $\Bbb{S}^2.$ For $n=3$, by the works of Ivey \cite{Ivey} and Perelman \cite{Perelman2}, it is known that any $3$-dimensional compact gradient shrinking Ricci soliton is a finite quotient of the round sphere $\Bbb{S}^3.$  Even the  non-compact  gradient shrinking Ricci soliton have been classified in two and three dimensions.

 Unlike the cases of dimensions  $2$ and $3$, the classification of  higher dimension gradient shrinking Ricci soliton is still incomplete.  For dimension $4,$  after the   aforementioned works of Koiso \cite{Ko} and Cao \cite{Cao1}, Wang and Zhu \cite{WZ1} later obtained a gradient K\"ahler-Ricci soliton on $\Bbb{CP}^2\sharp 2(- \Bbb{CP}^2).$ It  remains to be determined whether a compact non-Einstein gradient Ricci soliton  is necessarily  K\"ahler.   In
any dimension, it is known that a compact gradient shrinking Ricci soliton with constant scalar
curvature must be Einstein; see  \cite[Eminenti,  La Nave and Mantegazza]{ELM}.
Even a $4$-dimensional non-compact gradient shrinking Ricci soliton with constant scalar
curvature is  rigid, too. Indeed, recently in \cite{CZ2021} the first and third authors of the present paper proved that  a $4$-dimensional non-compact gradient shrinking Ricci soliton with constant scalar
curvature $S=2\lambda$ must be isometric to a finite quotient of $\mathbb{S}^2\times \mathbb{R}^2$. This result,  together with the previous results of Petersen and Wylie \cite{MR2507581}, and  Fernández-López and  García-Río \cite{FR},  confirms  that  a $4$-dimensional  complete non-compact gradient shrinking Ricci soliton with constant scalar curvature is   isometric to  the Gaussian shrinking soliton $\Bbb{R}^4$, a finite quotient of  $\Bbb{S}^{2}\times\Bbb{R}^{2}$, or a finite quotient of $\Bbb{S}^{3}\times\Bbb{R}$.  In recent
years, there has been much progress concerning the classification problem of  $4$-dimensional gradient shrinking Ricci solitons; see, e.g., \cite{Cao1,caoALM11,CaoChen,CRZ,CZ2021,CH,CWZ,KW,MW,MW2,Naber} and the references therein. 

It is interesting to study the topological character of the compact $4$-dimensional gradient shrinking Ricci solitons. It follows by the works of Derdzi\'{n}ski \cite{Derd} and Fern\'{a}ndez-L\'opez and Garc\'ia-R\'io \cite{FLGR0} that a compact $4$-dimensional gradient shrinking Ricci soliton  $M$ has finite fundamental group; see \cite{ELM} for an alternative proof.  Consequently, its first Betti number $b_1(M)=0$ and hence, its  Euler cha\-rac\-teristic $\chi(M)$ and   signature $\tau(M)$ satisfy the inequality  $\chi(M)>|\tau(M)|$
 (see (\ref{eqBt})). However, it is well known that for  a  compact $4$-dimensional Einstein manifold $M$, the Hitchin-Thorpe  inequality holds  (\cite{Thorpe}, \cite{Hitchin};  see also \cite[Theorem 6.35]{MR2371700}), that is, 
\begin{equation}
\label{HTeq}
\chi(M)\geq \frac{3}{2}|\tau (M)|.
\end{equation}
 This inequality provides a  topological obstruction for the existence of an Einstein metric on a given compact $4$-dimensional manifold. As gradient Ricci solitons
are natural generalizations of Einstein manifolds and  the nontrivial gradient shrinking Ricci solitons on $\Bbb{CP}^2\sharp (- \Bbb{CP}^2)$ and $\Bbb{CP}^2\sharp 2(- \Bbb{CP}^2)$  indeed satisfy the Hitchin-Thorpe inequality,  the following question was raised  (see \cite[Problem 6]{caoALM11}):

\begin{flushright}
\begin{minipage}[t]{4.37in}
 \emph{``Does the Hitchin-Thorpe inequality hold for compact $4$-dimensional gradient shrinking Ricci solitons?"} 
 \end{minipage}
\end{flushright}

In the last years,  some partial answers were obtained. In this context, the assumed conditions  under which the Hitchin-Thorpe inequality holds are the following, respectively.

 \cite[Ma]{MR3128968}:  the scalar curvature $S$ and the volume of $M$ satisfy  $\int_M S^2 dV_{g}\leq 24\lambda^2 \text{Vol}(M);$

 \cite[Fern\'{a}ndez-L\'opez and Garc\'ia-R\'io]{MR2672426}: some  upper
diameter bounds in terms of the Ricci curvature;

\cite[Tadano]{Tadano}: a lower bound on the diameter involving the maximum and minimum values of the scalar curvature on $M^4,$ namely, 
 $$\left(2+\frac{\sqrt{6}}{2}\pi\right)\frac{\sqrt{S_{max}-S_{min}}}{\lambda}\leq  \text{diam}(M),$$ where $S_{\max}$ and $S_{\min}$ denote the maximum and minimum values of the  scalar curvature $S$ on $M$,  respectively;

  \cite[Zhang and Zhang]{ZZ}: the  manifold has non-positive Yamabe invariant and admits long time solutions of the normalized Ricci flow equation with bounded scalar curvature.

  In the K\"ahler case,  it is known that  any compact  K\"ahler gradient Ricci soliton of real dimension $4$ with the natural orientation  satisfies the inequality $2\chi(M)+ 3\tau (M)\geq 0$   (see  \cite{MR3128968}; this result was generalized to K\"ahler almost Ricci solitons in  \cite{MR3275265}). 
 
In this paper, we consider the question mentioned earlier.  Without loss of generality, we assume that the gradient shrinking Ricci solitons satisfy the equation 
\begin{equation}
\label{maineq-1}
\textrm{Ric}+ \textrm{Hess}\,f=\frac12 g.
\end{equation} 
This normalization may be achieved  by a scaling of the metric. We first establish the following result.

\begin{theorem}
\label{euler}
Let $(M^{4},\,g, f)$ be a $4$-dimensional compact gradient shrinking Ricci soliton  satisfying \eqref{maineq-1}.
Then it holds that
\begin{eqnarray}
\label{eq14.5}
8\pi^2\chi(M)\geq \int_M |W|^2 dV_{g} +\dfrac{1}{24}Vol(M)(5-e^{f_{\max}-f_{\min}}),
\end{eqnarray} where  $f_{\min}$ and $f_{\max}$ stand for the minimum and maximum  of  the potential function $f$ on $M^4,$ respectively, $\textrm{Vol}(M)$ denotes the volume of $M^4$,  and $W$ is the Weyl tensor.

Moreover, equality holds if and only if $g$ is an Eins\-tein metric  (in this case, $f$ is constant).
\end{theorem}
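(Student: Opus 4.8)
The plan is to feed the soliton integral identities into the Chern--Gauss--Bonnet formula and reduce the whole statement to a single sharp curvature estimate that produces the factor $e^{f_{\max}-f_{\min}}$.

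I would start from the four-dimensional Gauss--Bonnet--Chern formula
\begin{equation*}
8\pi^{2}\chi(M)=\int_{M}\Big(|W|^{2}-\tfrac12|\ric|^{2}+\tfrac{1}{24}S^{2}\Big)\,dV_{g},
\end{equation*}
so that \eqref{eq14.5} is equivalent to the purely analytic inequality
\begin{equation*}
\tfrac{1}{24}\int_{M}S^{2}\,dV_{g}-\tfrac12\int_{M}|\ric|^{2}\,dV_{g}\ \ge\ \tfrac{1}{24}\mathrm{Vol}(M)\big(5-e^{f_{\max}-f_{\min}}\big).
\end{equation*}
Tracing \eqref{maineq-1} gives $S+\Delta f=2$, whence $\int_{M}S\,dV_{g}=2\,\mathrm{Vol}(M)$. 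Using $\ric=-\hess$, the integrated Bochner formula $\int_{M}|\textrm{Hess}\,f|^{2}=\int_{M}(\Delta f)^{2}-\int_{M}\textrm{Ric}(\nabla f,\nabla f)$, and the soliton identity $\nabla S=2\,\textrm{Ric}(\nabla f)$ (which yields $\int_{M}(\Delta f)^{2}=\int_{M}\langle\nabla S,\nabla f\rangle=2\int_{M}\textrm{Ric}(\nabla f,\nabla f)$), I obtain the clean relations
\begin{equation*}
\int_{M}|\ric|^{2}\,dV_{g}=\tfrac14\int_{M}(S-2)^{2}\,dV_{g},\qquad \int_{M}S^{2}\,dV_{g}=\int_{M}(S-2)^{2}\,dV_{g}+4\,\mathrm{Vol}(M).
\end{equation*}
Substituting these, the entire theorem collapses to the single estimate $\int_{M}(S-2)^{2}\,dV_{g}\le\tfrac12\,\mathrm{Vol}(M)\big(e^{f_{\max}-f_{\min}}-1\big)$.

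The heart of the matter is this last inequality, which I would prove with the weighted measure $e^{-f}\,dV_{g}$ and the drift Laplacian $\Delta_{f}=\Delta-\langle\nabla f,\nabla\,\cdot\,\rangle$, self-adjoint with respect to it. Two soliton identities are available: $\Delta_{f}S=S-2|\textrm{Ric}|^{2}$ and $\Delta_{f}(e^{f})=e^{f}\Delta f=(2-S)e^{f}$. The decisive step is to test $\Delta_{f}S$ against the nonnegative function $\phi=e^{f_{\max}}-e^{f}$; since $\Delta_{f}\phi=(S-2)e^{f}$, self-adjointness gives
\begin{equation*}
\int_{M}\big(S-2|\textrm{Ric}|^{2}\big)\big(e^{f_{\max}-f}-1\big)\,dV_{g}=\int_{M}\phi\,(\Delta_{f}S)\,e^{-f}\,dV_{g}=\int_{M}S\,(\Delta_{f}\phi)\,e^{-f}\,dV_{g}=\int_{M}(S-2)^{2}\,dV_{g}.
\end{equation*}
Because $|\textrm{Ric}|^{2}\ge\tfrac14 S^{2}$ one has $S-2|\textrm{Ric}|^{2}\le S-\tfrac12 S^{2}=\tfrac12\big(1-(S-1)^{2}\big)\le\tfrac12$, and since $e^{f_{\max}-f}-1\ge0$ everywhere, this identity yields
\begin{equation*}
\int_{M}(S-2)^{2}\,dV_{g}\ \le\ \tfrac12\int_{M}\big(e^{f_{\max}-f}-1\big)\,dV_{g}=\tfrac12\Big(e^{f_{\max}}\!\int_{M}e^{-f}\,dV_{g}-\mathrm{Vol}(M)\Big)\ \le\ \tfrac12\,\mathrm{Vol}(M)\big(e^{f_{\max}-f_{\min}}-1\big),
\end{equation*}
the final step using $\int_{M}e^{-f}\,dV_{g}\le e^{-f_{\min}}\mathrm{Vol}(M)$. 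This is exactly the required estimate.

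For the rigidity statement I would trace the equalities backward: the only inequalities used are $|\textrm{Ric}|^{2}\ge\tfrac14 S^{2}$ with $S-\tfrac12 S^{2}\le\tfrac12$, and $\int_{M}e^{-f}\le e^{-f_{\min}}\mathrm{Vol}(M)$. The latter already forces $f$ to be constant, which by \eqref{maineq-1} makes $\hess=0$, hence $\ric=0$ and $g$ Einstein with $f$ constant; conversely an Einstein metric gives equality directly. I expect the main obstacle to be conceptual rather than computational: one must realize that pairing $\Delta_{f}S$ with the particular test function $e^{f_{\max}}-e^{f}$ is what turns the Gauss--Bonnet defect $\int_{M}(S-2)^{2}$ into the oscillation of $e^{-f}$ with the \emph{sharp} constant, whereas a naive comparison of $\int_{M}S^{2}\,dV_{g}$ with its weighted analogue $\int_{M}S^{2}e^{-f}\,dV_{g}$ loses a multiplicative factor and produces only a strictly weaker inequality.
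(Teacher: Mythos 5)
Your proposal is correct: every identity and inequality in it checks out, and it establishes both \eqref{eq14.5} and the equality case. At the decisive step, however, it takes a genuinely different route from the paper. Both proofs share the same skeleton: use the Gauss--Bonnet--Chern formula \eqref{eqCBC} together with the soliton integral identities (your Bochner-formula reduction is equivalent to the paper's Lemma \ref{lemma2}) to reduce \eqref{eq14.5} to the single estimate
\begin{equation*}
\int_M (S-2)^2\,dV_g \;=\; \int_M\langle\nabla S,\nabla f\rangle\,dV_g \;\le\; \tfrac12\,\mathrm{Vol}(M)\bigl(e^{f_{\max}-f_{\min}}-1\bigr),
\end{equation*}
which is exactly the paper's inequality \eqref{mnlk}, since $\int_M\langle\nabla S,\nabla f\rangle\,dV_g=-\int_M S\,\Delta f\,dV_g=\int_M S(S-2)\,dV_g=\int_M(S-2)^2\,dV_g$; and both ultimately invoke the same pointwise bound $S-2|\mathrm{Ric}|^2\le S-\tfrac12 S^2\le\tfrac12$. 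The difference lies in how the factor $e^{f_{\max}-f_{\min}}$ is produced. The paper integrates \eqref{eq5} over the sub-level sets $D(t)=\{f<t\}$, obtains the ODE $\Phi'(s)=\Phi(s)+\Psi(s)$, and solves it with integrating factor $e^{-s}$; making the level-set integration rigorous is precisely what requires Lemmas \ref{lem-1}, \ref{lem-cm} and \ref{lem-1a} (control of critical and level sets, absolute continuity, following Colding--Minicozzi). You instead pair $\Delta_f S$ against the smooth nonnegative test function $\phi=e^{f_{\max}}-e^{f}$ in $L^2(e^{-f}dV_g)$, where $\Delta_f\phi=(S-2)e^{f}$ makes the pairing collapse to $\int_M(S-2)^2\,dV_g$; the weight $e^{-f}$ and the test function $e^{f}$ jointly play the role of the paper's integrating factor, but everything remains a global integration by parts of smooth functions on a compact manifold, so all of the measure-theoretic preparation becomes unnecessary. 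Your rigidity argument is correspondingly cleaner: equality forces $\int_M e^{-f}dV_g=e^{-f_{\min}}\mathrm{Vol}(M)$, hence $f\equiv f_{\min}$ and $g$ is Einstein, whereas the paper argues by contradiction using $\mathrm{Vol}(D(t))<\mathrm{Vol}(M)$ for $t<f_{\max}$. (For the converse, as the paper notes, Einstein forces $f$ constant --- $\mathrm{Hess}\,f=cg$ integrates to $c=0$, and compactness then gives $f$ constant --- after which equality in \eqref{eq14.5} is a direct computation.) In short: a correct, simpler proof of the same theorem, with no loss since compactness is all the statement assumes.
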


As a consequence of Theorem \ref{euler} we obtain the following corollary.

\begin{corollary}
\label{proAlg}
Let $(M^{4},\,g, f)$ be a $4$-dimensional compact gradient shrinking Ricci soliton   satisfying \eqref{maineq-1}.
 If $f_{\max}-f_{\min}\leq\log 5,$ then the Hitchin-Thorpe inequality 
\begin{eqnarray}\label{HT-a}
\chi(M)\geq \frac{3}{2}|\tau (M)|
\end{eqnarray}
holds on $M$.
\end{corollary}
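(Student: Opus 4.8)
The plan is to combine Theorem~\ref{euler} with the Hirzebruch signature theorem for closed oriented $4$-manifolds. Recall that the Weyl tensor splits under the Hodge star as $W = W^{+}\oplus W^{-}$ into its self-dual and anti-self-dual parts, and that the signature is computed by
$$\tau(M) = \frac{1}{12\pi^{2}}\int_{M}\big(|W^{+}|^{2} - |W^{-}|^{2}\big)\, dV_{g}.$$
First I would use this together with the pointwise orthogonality $|W|^{2} = |W^{+}|^{2} + |W^{-}|^{2}$ to obtain the elementary bound
$$12\pi^{2}\,|\tau(M)| = \left|\int_{M}\big(|W^{+}|^{2} - |W^{-}|^{2}\big)\, dV_{g}\right| \leq \int_{M}\big(|W^{+}|^{2} + |W^{-}|^{2}\big)\, dV_{g} = \int_{M} |W|^{2}\, dV_{g},$$
where the middle step is just the inequality $|a-b|\le a+b$ for $a,b\ge 0$ applied under the integral sign. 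Note this already absorbs both orientations into $|\tau(M)|$, so no separate orientation-reversal argument is needed.

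Next I would feed this into the inequality \eqref{eq14.5} of Theorem~\ref{euler}. Since $\int_{M}|W|^{2}\,dV_{g} \geq 12\pi^{2}|\tau(M)|$, we get
$$8\pi^{2}\chi(M) \;\geq\; 12\pi^{2}\,|\tau(M)| + \frac{1}{24}\,\mathrm{Vol}(M)\big(5 - e^{f_{\max}-f_{\min}}\big).$$
The hypothesis $f_{\max}-f_{\min}\le \log 5$ is exactly what makes the second summand nonnegative: it yields $e^{f_{\max}-f_{\min}}\le 5$, hence $5 - e^{f_{\max}-f_{\min}}\ge 0$, and since $\mathrm{Vol}(M)>0$ this term can be dropped. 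This leaves $8\pi^{2}\chi(M) \geq 12\pi^{2}|\tau(M)|$, which is precisely the Hitchin--Thorpe inequality \eqref{HT-a} after dividing by $8\pi^{2}$.

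Essentially all of the analytic difficulty is already carried by Theorem~\ref{euler}, so I expect no serious obstacle in the corollary itself; it is a clean topological consequence. The only points deserving care are the correct normalization constant in the signature formula, so that the factor $12\pi^{2}$ matches the factor $8\pi^{2}$ in \eqref{eq14.5} and produces the sharp ratio $3/2$, and the observation that the threshold $\log 5$ is dictated exactly by the constant $5$ appearing in the volume term of \eqref{eq14.5}. One could alternatively route the argument through the combinations $2\chi(M)\pm 3\tau(M)$ and the Gauss--Bonnet integrand, but passing through $\int_{M}|W|^{2}\,dV_{g}\ge 12\pi^{2}|\tau(M)|$ keeps the dependence on Theorem~\ref{euler} most transparent.
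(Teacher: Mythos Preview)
Your proof is correct and is essentially the same as the paper's. The paper phrases the combination via $2\chi(M)\pm 3\tau(M)$ (adding and subtracting the Hirzebruch formula to \eqref{eq14.5} so that $2|W^{\pm}|^{2}$ appears on the right), which is exactly the alternative route you mention at the end; your formulation through the bound $\int_{M}|W|^{2}\,dV_{g}\ge 12\pi^{2}|\tau(M)|$ is a trivially equivalent repackaging.
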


\begin{remark} \label{rema1}
The  conclusion in Corollary \ref{proAlg} also  holds if one  replaces the assumption  $f_{\max}-f_{\min}\leq\log 5$  by  $S_{\max}-S_{\min}\leq\log 5,$ where $S_{\max}$ and $S_{\min}$ denote the maximum and minimum  of the  scalar curvature $S$ on $M$,  respectively.   Indeed, by a choice of $f$, the  scalar curvature $S$ of a normalized gradient shrinking Ricci soliton may satisfy $$S+\vert\nabla f\vert^2=f\,\,\,\,\hbox{and}\,\,\,\,\,S>0.$$  At a point $p\in M$ where the function $f$ takes the maximum, $(\nabla f) (p)=0$.
Hence,  $S(p)=f(p)=f_{\max}\geq f=S+\vert\nabla f\vert^2\geq S$. Consequently, $S_{\max}=S(p)=f_{\max}$. Let $q$ be a point where the function $f$ takes the minimum. Then $f_{\max}-f_{\min}= S_{\max}-S(q)\leq S_{\max}-S_{\min}$. 
\end{remark}

\begin{remark}
We point out that, under the choice of $f$ as in Remark \ref{rema1}, a normalized compact $4$-dimensional gradient shrinking Ricci soliton $(M^{4},\,g, f)$ with $f_{\max}\leq 3$ must satisfy the Hitchin-Thorpe inequality. Indeed, since $S>0$ and $S+\Delta f=2,$ one obtains that $$\int_{M}S^2 dV_{g}\leq S_{\max}\int_{M}S dV_{g}=2\,S_{\max}Vol(M).$$ Therefore, taking into account that $S_{\max}=f_{\max}\leq 3,$ we have $\int_{M}S^2 dV_{g}\leq 6 Vol(M).$ So, it suffices to use the result obtained by Ma \cite{MR3128968}  to conclude the Hitchin-Thorpe inequality holds on $M^4.$

\end{remark}

Again, as an application of Theorem \ref{euler}, we deduce the following volume  upper bounds depending on the range of the potential function.

\begin{theorem}
\label{Thm2}
Let $(M^{4},\,g, f)$ be a $4$-dimensional compact gradient shrinking Ricci soliton  satisfying \eqref{maineq-1}.
 Then the following assertions hold:
 \begin{align}
Vol(M)\left(5-e^{f_{\max}-f_{\min}}\right) &\leq 384\pi^2.\label{v1}
\end{align}
  Equality  holds if and only if  $(M,g)$ is a sphere $\mathbb{S}^4$ with the radius $\sqrt{6} $. 
 \begin{align}
Vol(M)\left(5-e^{f_{\max}-f_{\min}}\right) &\leq \mathcal{Y}(M,[g])^2,\label{v2}
\end{align}
 where $\mathcal{Y}(M,[g])$ stands for the Yamabe invariant of $(M^4,\,g).$ 
Moreover, equality holds  if and only if $g$ is an Einstein metric.

\end{theorem}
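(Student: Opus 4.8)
The plan is to deduce both estimates from Theorem \ref{euler} by re-expressing its content through the Chern--Gauss--Bonnet formula and then exploiting the conformal invariance of the Weyl term. Recall that in dimension four
\[
8\pi^2\chi(M)=\int_M\Big(|W|^2+\frac{S^2}{24}-\frac{|\ric|^2}{2}\Big)\,dV_{g},
\]
where $\ric$ is the traceless Ricci tensor. Since $\chi(M)$ is a topological invariant and $|W|^2\,dV_{g}$ is a \emph{pointwise} conformal invariant in dimension four, the integral $\int_M\big(\frac{S^2}{24}-\frac{|\ric|^2}{2}\big)\,dV_{g}=8\pi^2\chi(M)-\int_M|W|^2\,dV_{g}$ is a conformal invariant of the class $[g]$. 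I would record this as the starting observation, noting that Theorem \ref{euler} is precisely the inequality $\frac{1}{24}\mathrm{Vol}(M)\big(5-e^{f_{\max}-f_{\min}}\big)\le 8\pi^2\chi(M)-\int_M|W|^2\,dV_{g}$.

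For \eqref{v2}, I would evaluate this conformal invariant at a Yamabe minimizer $\bar g\in[g]$. Because a compact shrinking soliton has $S>0$, the class is of positive Yamabe type, so $\mathcal{Y}(M,[g])>0$ and the resolution of the Yamabe problem furnishes $\bar g$ with constant scalar curvature $\bar S>0$ and $\mathcal{Y}(M,[g])^2=\bar S^{2}\,\mathrm{Vol}(\bar g)=\int_M \bar S^{2}\,dV_{\bar g}$. Discarding the nonnegative term $\frac12|\ric_{\bar g}|^2$ then gives
\[
8\pi^2\chi(M)-\int_M|W|^2\,dV_{g}=\int_M\Big(\frac{\bar S^2}{24}-\frac{|\ric_{\bar g}|^2}{2}\Big)\,dV_{\bar g}\;\le\;\frac{1}{24}\int_M\bar S^{2}\,dV_{\bar g}=\frac{1}{24}\,\mathcal{Y}(M,[g])^2.
\]
Chaining this with Theorem \ref{euler} yields $\frac{1}{24}\mathrm{Vol}(M)\big(5-e^{f_{\max}-f_{\min}}\big)\le\frac{1}{24}\mathcal{Y}(M,[g])^2$, which is \eqref{v2}.

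For \eqref{v1}, I would combine \eqref{v2} with Aubin's theorem, that the round sphere maximizes the Yamabe constant among all conformal classes: $\mathcal{Y}(M,[g])\le\mathcal{Y}(\mathbb{S}^4)$. A one-line computation gives $\mathcal{Y}(\mathbb{S}^4)^2=\big(12\big)^2\,\mathrm{Vol}(\mathbb{S}^4)=144\cdot\tfrac{8\pi^2}{3}=384\pi^2$, where $\mathrm{Vol}(\mathbb{S}^4)$ is the volume of the unit round sphere. Substituting into \eqref{v2} gives $\mathrm{Vol}(M)\big(5-e^{f_{\max}-f_{\min}}\big)\le\mathcal{Y}(M,[g])^2\le 384\pi^2$, which is \eqref{v1}; note that the sharp constant $384\pi^2$ is exactly $\mathcal{Y}(\mathbb{S}^4)^2$, explaining the appearance of $\mathbb{S}^4$ in the equality case.

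The equality analysis is the only place I expect genuine care to be needed. For \eqref{v2}, equality forces equality both in Theorem \ref{euler} --- whence $g$ is Einstein --- and in the step discarding $\frac12|\ric_{\bar g}|^2$; conversely, an Einstein metric with $S>0$ is itself the constant-scalar-curvature representative (a positive Einstein metric is a Yamabe minimizer in its class) with $\ric_g=0$, so equality is realized, giving ``iff $g$ is Einstein.'' For \eqref{v1}, equality in addition forces $\mathcal{Y}(M,[g])=\mathcal{Y}(\mathbb{S}^4)$; the rigidity in Aubin's inequality, together with Obata's theorem applied to the Einstein metric $g$ (which is then conformal to the round metric), forces $(M,g)$ to be round, and the normalization $\lambda=\tfrac12$ pins the radius to $\sqrt6$, yielding $\mathbb{S}^4(\sqrt6)$. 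The delicate ingredients to verify carefully are the positive-Yamabe-class facts $\inf_{\tilde g\in[g]}\int_M S_{\tilde g}^2\,dV_{\tilde g}=\mathcal{Y}(M,[g])^2$ and the existence and properties of the minimizer $\bar g$, and the two rigidity statements (positive Einstein metrics as Yamabe minimizers, and the Aubin--Obata rigidity) invoked in the equality cases.
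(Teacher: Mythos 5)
Your proposal is correct, and it relates to the paper's proof in a partly acknowledged way. For \eqref{v2} your argument is essentially the paper's: both exploit that $8\pi^2\chi(M)-\int_M|W|^2\,dV_g=\int_M\bigl(\tfrac{S^2}{24}-\tfrac12|\mathring{Ric}|^2\bigr)\,dV_g$ is a conformal invariant, evaluate it on a Yamabe minimizer $\overline{g}$, discard the trace-free Ricci term to bound it by $\tfrac{1}{24}\mathcal{Y}(M,[g])^2$, and chain with Theorem \ref{euler}; the paper merely routes the computation through the soliton identities \eqref{eq-ric} and \eqref{euler-b}, which your direct use of the Gauss--Bonnet integrand renders unnecessary, so your version is a mild streamlining of the same idea. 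For \eqref{v1}, however, the paper's main proof does not pass through \eqref{v2} at all: it invokes Gursky's inequality $8\pi^2\left(\chi(M)-2\right)\leq\int_M|W|^2\,dV_g$ for compact oriented four-manifolds of positive scalar curvature (equality iff $M$ is conformally $\mathbb{S}^4$) and combines it with \eqref{eq14.5}. Your derivation of \eqref{v1} from \eqref{v2} together with the Aubin--Schoen inequality $\mathcal{Y}(M,[g])\leq\mathcal{Y}(\mathbb{S}^4,[g_0])$ --- where your observation that $\mathcal{Y}(M,[g])>0$ (from $S>0$) is genuinely needed, since otherwise one could not square the inequality --- is exactly the alternative route the paper records in its closing remark, including the rigidity statement. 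The two routes trade one deep input for another (Gursky's theorem versus the solution of the Yamabe problem plus Schoen's rigidity); yours explains the constant $384\pi^2$ as $\mathcal{Y}(\mathbb{S}^4,[g_0])^2$ and makes the implication \eqref{v2} $\Rightarrow$ \eqref{v1} explicit, while the paper's yields \eqref{v1} independently of any Yamabe discussion and in the order \eqref{v1} before \eqref{v2}. Your equality analyses (rigidity in Theorem \ref{euler} giving Einstein; Schoen rigidity plus Obata giving the round $\mathbb{S}^4$ of radius $\sqrt{6}$; positive Einstein metrics being Yamabe minimizers for the converse in \eqref{v2}) agree in substance with the paper's, which instead uses Gursky's equality case to get conformal equivalence to $\mathbb{S}^4$ and then $W=0$ plus Einstein to conclude constant curvature $\tfrac16$.
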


\medskip

{\bf Acknowledgement.} We would like to thank professor Huai-Dong Cao
for his interest in this work and helpful comments on an earlier version of the
paper.

\section{Notations and Preliminaries}

In this section we review some basic facts and present some lemmas that will be
used for the establishment of the main results.
 Throughout this paper, we adopt the following convention for the curvatures:

$$\text{Rm}(X,Y)=\nabla^2_{Y,X}-\nabla^2_{X,Y}, \quad  Rm(X,Y,Z,W)=g(\text{Rm}(X,Y)Z,W),$$
$$ K(e_i,e_j)=Rm(e_i,e_j,e_i,e_j), \quad  \text{Ric}(X,Y)=\text{tr}\, \text{Rm}(X, \cdot, Y, \cdot),$$
$$  R_{ij}=\text{Ric}(e_i, e_j), \quad  S=\text{tr}\,\text{Ric}.$$  Besides, the Weyl tensor $W$ is defined by the following decomposition formula
\begin{eqnarray}
\label{weyl}
R_{ijkl}&=&W_{ijkl}+\frac{1}{n-2}\big(R_{ik}g_{jl}+R_{jl}g_{ik}-R_{il}g_{jk}-R_{jk}g_{il}\big) \nonumber\\
 &&-\frac{R}{(n-1)(n-2)}\big(g_{jl}g_{ik}-g_{il}g_{jk}\big),
\end{eqnarray} where $R_{ijkl}$ stands for the Riemann curvature tensor of $(M^n,\,g).$
\vskip 0.2cm

Now, let $(M, g, f)$ be an $n$-dimensional gradient shrinking Ricci soliton satisfying
\begin{align}\label{soliton-2}
\text{Ric}+\text{Hess}\,f=\frac{1}{2} g.
\end{align} Tracing the soliton equation \eqref{soliton-2} we get 
 \begin{align}\label{prop1}
 S+\Delta f=\frac{n}{2},
 \end{align} where $S$ denotes the scalar curvature of $M$.
 
 Moreover,  it is known that $S+|\nabla f|^2-f $  is constant (see \cite{Ha1}) and hence, by adding a constant to $f$ if necessary, we have the equation
\begin{align}\label{prop2}
S+|\nabla f|^2=f. 
\end{align} It   follows from (\ref{prop1}) and (\ref{prop2}) that 
\begin{equation}
\label{kh12a}
\Delta_{f}f=\frac{n}{2}-f,
\end{equation} where $\Delta_f\cdot:=\Delta\cdot-\nabla_{\nabla f}\cdot$ is the drifted Laplacian.

In the sequel we recall the  useful equations for the curvatures of a gradient shrinking Ricci soliton.   For their proofs, we refer the reader to \cite{PW,ELM}.

\begin{eqnarray}
\nabla _{l}R_{ijkl} &=&R_{ijkl}f_{l}=\nabla _{j}R_{ik}-\nabla _{i}R_{jk}, \label{eq1}
\\
\nabla _{j}R_{ij} &=&R_{ij}f_{j}=\frac{1}{2}\nabla _{i}S,  \label{eq2} \\
 \Delta _{f}\mathrm{Rm} &=&\mathrm{Rm}+\mathrm{Rm}\ast \mathrm{Rm},  \label{eq4}\\
 \Delta _{f}R_{ij} &=&R_{ij}-2R_{ikjl}R_{kl},  \label{eq6}\\
\Delta _{f}S &=&S-2\left\vert \mathrm{Ric}\right\vert ^{2} =\langle\mathrm{Ric},g-2\mathrm{Ric}\rangle \label{eq5}.
\end{eqnarray}
In this paper, we consider that $M$ is compact.  In \cite{chen}, Chen proved that every complete gradient shrinking Ricci soliton has positive scalar curvature  unless it is flat. Hence  $S>0$   when $M$ is compact.

In the rest of this section, we focus on dimension $n=4.$ It is known that, on a four-dimensional oriented Riemannian manifold $M^4,$ the bundle of $2$-forms $\Lambda^2$ can be invariantly decomposed as a direct sum 
\begin{equation}
\label{lk1}
\Lambda^2=\Lambda^{+}\oplus\Lambda^{-},
\end{equation}
 where $\Lambda^{\pm}$  is the ($\pm 1)$-eigenspace of the Hodge star operator $\ast$. This decomposition is conformally invariant. In particular, let $\{e_{i}\}_{i=1}^{4}$ be an oriented orthonormal basis of the tangent space at any fixed point $p\in M^4.$ Then it gives rise to bases of $\Lambda^{\pm}$
\begin{equation}
\label{lkm1}
\Big\{e^{1}\wedge e^{2}\pm e^{3}\wedge e^{4},\,e^{1}\wedge e^{3}\pm e^{4}\wedge e^{2},\,e^{1}\wedge e^{4}\pm e^{2}\wedge e^{3}\Big\},
\end{equation} where each bi-vector has length $\sqrt{2}.$ Moreover, the decomposition (\ref{lk1}) allows us to conclude that the Weyl tensor $W$ is an endomorphism of $\Lambda^2 $ such that 
\begin{equation}
\label{ewq}
W = W^+\oplus W^-,
\end{equation} where $W^\pm:\Lambda^\pm M\longrightarrow\Lambda^\pm M$ are called the {\it self-dual} part and {\it anti-self-dual} part of the Weyl tensor $W$, respectively.  Thereby, we may fix a point $p\in M^4$ and diagonalize $W^\pm$ such that $w_i^\pm,$ $1\le i \le 3,$ are their respective eigenvalues. Also, these eigenvalues satisfy
\begin{equation}
\label{eigenvalues} w_1^{\pm}\leq w_2^{\pm}\leq w_3^{\pm}\,\,\,\,\hbox{and}\,\,\,\,w_1^{\pm}+w_2^{\pm}+w_3^{\pm}
= 0.
\end{equation} Hence, the following inequality holds

\begin{equation}
\label{eqdet}
\det W^{+}\leq \frac{\sqrt{6}}{18}|W^{+}|^{3}.
\end{equation} Moreover, equality holds in (\ref{eqdet}) if and only if $w_{1}^{+}=w_{2}^{+}.$

By Poincar\'e duality, it follows that, for all compact oriented $4$-dimensional manifolds, the {\it Euler characteristic} and {\it signature} of $M^4$ are given by
$$\chi(M)=2-2b_{1}(M)+b_{2}(M)\,\,\,\,\,\,\hbox{and}\,\,\,\,\,\,\tau(M)=b_{+}(M)-b_{-}(M),$$ where $b_{1}(M)$ and $b_{2}(M)=b_{+}(M)+b_{-}(M)$ are the first and second Betti numbers of $M^4,$ respectively. It turns out that
\begin{equation}
\label{eqBt}
\chi(M)\geq |\tau(M)|-2b_{1}(M)+2.
\end{equation}

The curvature and topology of a compact $4$-dimensional manifold are connected via the classical Gauss-Bonnet-Chern formula
\begin{equation}
\label{eqCBC}
\chi(M)=\frac{1}{8\pi^2}\int_M\left(|W^+|^2+|W^-|^2+\frac{S^2}{24}-\frac{1}{2}|\mathring{Ric}|^2\right)dV_g
\end{equation}and the Hirzebrush's theorem 
\begin{equation}
\label{eqsig}
\tau(M)=\frac{1}{12\pi^{2}}\int_M\left(|W^+|^2-|W^-|^2\right)dV_{g},
\end{equation} where $\mathring{Ric}=Ric-\frac{S}{4}g;$ for more details, see \cite[Chapter 13]{MR2371700}. It is easy to check from (\ref{eqCBC}) and (\ref{eqsig}) that every compact $4$-dimensional Einstein manifold must satisfy the Hitchin-Thorpe inequality. 

 We recall some useful expressions for the Euler cha\-rac\-teristic $\chi(M)$  and give their proof for the sake of completeness.

\begin{lemma}\label{lemma2} Let $(M^{4},\,g, f)$ be a compact $4$-dimensional gradient shrinking Ricci soliton satisfying \eqref{soliton-2}. Then
\begin{align}
8\pi^{2} \chi(M)&=\int_{M}|W|^2 dV_{g} + \frac{1}{6}Vol(M) -\frac{1}{12}\int_{M}\langle \nabla S, \nabla f\rangle dV_{g}, \label{euler-a}\\
8\pi^{2} \chi(M)&= \int_{M}|W|^2 dV_{g} + \frac{1}{2}Vol(M) -\frac{1}{12}\int_{M}S^2dV_g. \label{euler-b}
\end{align}
\end{lemma}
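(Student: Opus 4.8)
The plan is to start from the Gauss--Bonnet--Chern formula \eqref{eqCBC} and rewrite its integrand using the soliton structure equations, reducing everything to the scalar curvature $S$, the norm $|\mathrm{Ric}|^2$, the volume, and a single gradient integral involving the potential $f$. Since $\mathring{Ric}=\mathrm{Ric}-\tfrac{S}{4}g$ in dimension four, a pointwise computation gives $|\mathring{Ric}|^2=|\mathrm{Ric}|^2-\tfrac{S^2}{4}$, so \eqref{eqCBC} becomes
\begin{equation*}
8\pi^2\chi(M)=\int_M|W|^2\,dV_g+\frac{1}{6}\int_M S^2\,dV_g-\frac{1}{2}\int_M|\mathrm{Ric}|^2\,dV_g,
\end{equation*}
where $|W|^2=|W^+|^2+|W^-|^2$. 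The task is then to eliminate $\int_M|\mathrm{Ric}|^2\,dV_g$ and to express $\int_M S^2\,dV_g$ through $\mathrm{Vol}(M)$ and the gradient term.

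For the first reduction I would integrate the evolution identity \eqref{eq5}, $\Delta_f S=S-2|\mathrm{Ric}|^2$, over $M$ against the standard Riemannian measure. Writing $\Delta_f S=\Delta S-\langle\nabla f,\nabla S\rangle$ and using $\int_M\Delta S\,dV_g=0$ on the closed manifold gives
\begin{equation*}
\int_M\bigl(S-2|\mathrm{Ric}|^2\bigr)\,dV_g=-\int_M\langle\nabla S,\nabla f\rangle\,dV_g,
\end{equation*}
and hence $\int_M|\mathrm{Ric}|^2\,dV_g=\tfrac{1}{2}\int_M S\,dV_g+\tfrac{1}{2}\int_M\langle\nabla S,\nabla f\rangle\,dV_g$.

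For the scalar quantities I would use the trace equation \eqref{prop1}, $S+\Delta f=2$. Integrating it directly gives $\int_M S\,dV_g=2\,\mathrm{Vol}(M)$, while multiplying by $S$ and integrating by parts gives $\int_M S^2\,dV_g=4\,\mathrm{Vol}(M)+\int_M\langle\nabla S,\nabla f\rangle\,dV_g$. Substituting these two expressions back into the first displayed identity and collecting the constants ($\tfrac23-\tfrac12=\tfrac16$ for the volume term and $\tfrac16-\tfrac14=-\tfrac{1}{12}$ for the gradient term) produces \eqref{euler-a}. Eliminating the gradient term through $\int_M\langle\nabla S,\nabla f\rangle\,dV_g=\int_M S^2\,dV_g-4\,\mathrm{Vol}(M)$ then yields \eqref{euler-b}.

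The computation is elementary once the right identities are assembled, so the step I would be most careful about is that every integration is carried out against the unweighted measure $dV_g$ rather than the weighted measure $e^{-f}\,dV_g$. With the unweighted measure the drift term in $\Delta_f$ does \emph{not} integrate to zero, and it is exactly the surviving cross term $\int_M\langle\nabla S,\nabla f\rangle\,dV_g$ that connects the two formulas \eqref{euler-a} and \eqref{euler-b}; conflating it with the weighted integral would make this term vanish and give the wrong constants.
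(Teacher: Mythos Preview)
Your proof is correct and follows essentially the same approach as the paper: both use the identity $|\mathring{Ric}|^2=|\mathrm{Ric}|^2-\tfrac{S^2}{4}$ in the Gauss--Bonnet--Chern formula, integrate the traced soliton equation \eqref{prop1} and the drift-Laplacian identity \eqref{eq5} against $dV_g$, and combine $\int_M S^2\,dV_g=4\,\mathrm{Vol}(M)+\int_M\langle\nabla S,\nabla f\rangle\,dV_g$ with $2\int_M|\mathrm{Ric}|^2\,dV_g=2\,\mathrm{Vol}(M)+\int_M\langle\nabla S,\nabla f\rangle\,dV_g$ to reach \eqref{euler-a} and then \eqref{euler-b}. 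The only difference is cosmetic ordering: the paper derives the auxiliary integral identities first and substitutes at the end, whereas you substitute into Gauss--Bonnet--Chern first and then fill in the pieces.
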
 

\begin{proof}
 It follows from (\ref{prop1}) that 
\begin{equation}\int_{M}S\,dV_{g}=2\,Vol(M).\label{eq10-1}
\end{equation}
 Hence, by the Cauchy inequality, we have
$$4\,Vol(M)^2=\left(\int_M S dV_{g}\right)^2\leq Vol(M) \int_M S^2\, dV_{g}.
$$Consequently,
 \begin{equation}\label{eq15}
 \int_M S^2\, dV_{g}\geq 4\, Vol(M).
\end{equation} Moreover,  the equality in \eqref{eq15} holds if and only if $S=2$ and in this case, $M^4$ must be Einstein (see, e.g., \cite{ELM}).
 At the same time, observe that 

\begin{eqnarray}
\label{plqw}
\int_{M}S^2\, dV_{g}&=&\int_{M}S\left(2-\Delta f\right) dV_{g}\nonumber\\ &=& 2\int_{M}S\, dV_{g} - \int_{M}S\Delta f\,dV_{g}\nonumber\\&=& 4\,Vol(M)+\int_{M}\langle \nabla S,\nabla f\rangle dV_{g}.
\end{eqnarray} Therefore, \eqref{eq15} and  (\ref{plqw}) give that $\int_{M}\langle \nabla S,\nabla f\rangle dV_{g}\geq 0.$
On the other hand,  integrating  (\ref{eq5}) and then using \eqref{eq10-1}, we get
\begin{eqnarray}
\label{phn1}
2\int_{M}|Ric|^2\,dV_{g}&=& \int_{M}\left( S - \Delta S + \langle \nabla S, \nabla f\rangle \right)\, dV_{g}\nonumber\\ &=& 2\,Vol(M) +\int_{M}\langle \nabla S,\nabla f\rangle dV_{g}. 
\end{eqnarray}
Substituting \eqref{plqw} and \eqref{phn1} into the Gauss-Bonnet-Chern formula (\ref{eqCBC}) yields
\begin{eqnarray}
\label{eqth}
8\pi^{2} \chi(M)&=&\int_{M} \left(|W|^2 + \frac{S^2}{24}-\frac{1}{2}|\mathring{Ric}|^2\right) dV_{g}\nonumber\\&=& \int_{M}\left(|W|^2 +\frac{S^2}{6}-\frac{1}{2}|Ric|^2\right) dV_{g}\nonumber\\&=&\int_{M}|W|^2 dV_{g} + \frac{1}{6}Vol(M) -\frac{1}{12}\int_{M}\langle \nabla S, \nabla f\rangle dV_{g},
\end{eqnarray}
which is \eqref{euler-a}. Finally, plugging \eqref{plqw} into \eqref{eqth} gives \eqref{euler-b}.

\end{proof}

\section{Proof of the Main Results}

For a gradient shrinking Ricci soliton  $(M^{n},\,g,\, f)$,  as in \cite{MR2732975} by Cao and Zhou, we consider the sub-level set of the potential function: $$D(t):=\{x\in M;\, f(x)<t\}.$$
In this section, we first discuss  the absolute continuity of the integral of a  bound function on $D(t).$

\subsection{Absolute continuity of a integral  on $D(t)$}

 Recently,  Colding and Minicozzi \cite[Lemma 1.1]{Cold} proved the properties of the  critical set and the  level sets  of a proper function $f$    satisfying  $\Delta_f f=\frac{n}{2}-f$ on the set $\{x\in M;\,f(x)\geq \frac{n}{2}\}$, where  $M$ is an $n$-dimensional Riemannian manifold. In this paper,  we need a version of this lemma for the whole manifold $M$.  More precisely,
\begin{lemma} \label{lem-1}
Let $(M^{n}, g)$ be an $n$-dimensional complete (not necessarily compact) Riemannian manifold. Suppose  that  $f$ is a proper and non-constant function satisfying  $\Delta_f f=\frac{n}{2}-f$ on $M$. Let $\mathcal{C}$ be the set of critical points of $f.$ Then the following assertions occur:
\begin{enumerate}
\item[(a)]  The critical set $\mathcal{C}$ in $M$ is locally contained in a smooth $(n-1)$-dimensional manifold.
\item[(b)] Each level set $\{ f(x)=c\}$ has  $n$-dimensional  Hausdorff measure $\mathcal{H}^{n}(\{f=c\})=0.$ 
\item[(c)] The regular set $\partial D(t)\setminus\mathcal{C}$ is dense in $\partial D(t).$
\end{enumerate}

\end{lemma}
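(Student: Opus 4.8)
The statement concerns a proper non-constant function $f$ satisfying $\Delta_f f = \frac{n}{2} - f$ on a complete manifold $M$, and I want to establish three facts: (a) the critical set $\mathcal{C}$ sits locally inside a smooth $(n-1)$-dimensional submanifold, (b) every level set has vanishing $n$-dimensional Hausdorff measure, and (c) the regular part of $\partial D(t)$ is dense. The reference point is Colding--Minicozzi's Lemma 1.1, which proves exactly these kinds of statements on the region $\{f \ge \frac{n}{2}\}$. So the strategy is to extract what is genuinely local/analytic in their argument, check that it never actually used the restriction $f \ge \frac{n}{2}$, and only invoke the global constraint where it is needed. The key analytic input is that the equation $\Delta_f f = \frac{n}{2} - f$ can be rewritten as $\Delta f - \langle \nabla f, \nabla f\rangle = \frac{n}{2} - f$, i.e. $\Delta f = \frac{n}{2} - f + |\nabla f|^2$, which shows $f$ solves a second-order elliptic equation with smooth (indeed real-analytic, since the soliton metric is real-analytic) right-hand side. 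Thus $f$ is real-analytic, and critical points are where $\nabla f = 0$.

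\medskip

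\textbf{Step 1 (parts (a) and (b)).} For part (a), I would argue that at a critical point $p$ the structure of $\mathcal{C}$ near $p$ is governed by the Hessian. If $\mathrm{Hess}\,f(p) \ne 0$, then some component $\partial_i f$ has nonvanishing gradient at $p$, and $\mathcal{C}$ is locally contained in the smooth hypersurface $\{\partial_i f = 0\}$ by the implicit function theorem. The case $\mathrm{Hess}\,f(p) = 0$ is the delicate one: here I would use the equation. Tracing, $\Delta f(p) = \frac{n}{2} - f(p) + |\nabla f(p)|^2 = \frac{n}{2} - f(p)$ since $p$ is critical, but also $\Delta f(p) = \mathrm{tr}\,\mathrm{Hess}\,f(p) = 0$, forcing $f(p) = \frac{n}{2}$. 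This is precisely where the value $\frac{n}{2}$ enters naturally rather than by hypothesis, so the Colding--Minicozzi local analysis applies verbatim without assuming $f \ge \frac{n}{2}$ a priori. At such a point one uses the \L ojasiewicz structure theory of real-analytic functions (or the finite-order vanishing of $\nabla f$ coming from analyticity) to conclude $\mathcal{C}$ is still locally contained in a smooth $(n-1)$-manifold. Part (b) then follows: a level set $\{f = c\}$ splits into its regular part, which is a smooth hypersurface and hence has $\mathcal{H}^n$-measure zero, and its singular part $\{f = c\} \cap \mathcal{C}$, which by (a) lies in an $(n-1)$-dimensional set and so also has $\mathcal{H}^n$-measure zero.

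\medskip

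\textbf{Step 2 (part (c)).} To show $\partial D(t) \setminus \mathcal{C}$ is dense in $\partial D(t)$, I would argue by contradiction: if there were an open set $U \subset \partial D(t)$ consisting entirely of critical points, then $\nabla f \equiv 0$ on $U$. Since $f$ is real-analytic and $M$ is connected (taking components if necessary), vanishing of $\nabla f$ on an open subset of a level hypersurface, combined with the equation, would propagate to force $f$ constant on a neighborhood and then globally by analytic continuation, contradicting that $f$ is non-constant. Alternatively, and more cleanly, one uses part (a): $\mathcal{C}$ is locally contained in an $(n-1)$-manifold, so $\mathcal{C} \cap \partial D(t)$ is a set of codimension at least one within the hypersurface $\partial D(t)$, hence cannot contain an open subset of $\partial D(t)$, giving density of the complement.

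\medskip

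\textbf{The main obstacle.} The hard part is part (a) at the degenerate critical points where $\mathrm{Hess}\,f$ vanishes: ensuring the critical set remains confined to a smooth codimension-one object requires the real-analyticity of $f$ and a careful application of the local structure theory of analytic varieties, as in Colding--Minicozzi. The subtlety specific to this paper is checking that their argument, originally phrased for $\{f \ge \frac{n}{2}\}$, carries over to all of $M$; the observation that degenerate critical points automatically satisfy $f = \frac{n}{2}$ is what makes this transfer work, so that extending to the whole manifold introduces no genuinely new critical behavior and the proof of \cite{Cold} applies with only cosmetic changes.
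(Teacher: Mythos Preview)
Your overall strategy---splitting critical points according to whether the Hessian vanishes, and noting that the implicit function theorem handles the non-degenerate case---matches the paper's approach, which equivalently splits according to whether $f(p)\neq \frac{n}{2}$ or $f(p)=\frac{n}{2}$ (since at a critical point $\Delta f(p)=\frac{n}{2}-f(p)$). Your observation that $\mathrm{Hess}\,f(p)=0$ forces $f(p)=\frac{n}{2}$ is exactly what collapses the two decompositions into one another.

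However, there are two genuine gaps. First, your treatment of the degenerate case rests on real-analyticity of $f$, which you justify by saying ``the soliton metric is real-analytic.'' But the lemma is stated for an arbitrary complete Riemannian manifold with a proper $f$ satisfying $\Delta_f f=\frac{n}{2}-f$; there is no soliton hypothesis and no analyticity of $g$ is assumed. The paper avoids analyticity entirely: on $\mathcal{C}\cap\{f=\frac{n}{2}\}$ the function $u=f-\frac{n}{2}$ satisfies $\Delta_f u=-u$ with $u=0$ and $\nabla u=0$, so this is precisely the singular set of a solution to a second-order elliptic equation, and Han--Hardt--Lin gives that it has locally finite $(n-2)$-dimensional Hausdorff measure. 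That is the replacement for your \L ojasiewicz step.

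Second, both of your arguments for (c) are problematic. The analytic-continuation route again needs analyticity. The alternative---that (a) forces $\mathcal{C}\cap\partial D(t)$ to have codimension at least one \emph{inside} $\partial D(t)$---does not follow: (a) only says $\mathcal{C}$ sits locally in some smooth $(n-1)$-manifold, and nothing prevents that $(n-1)$-manifold from locally coinciding with $\partial D(t)$. The paper's argument for $t\neq \frac{n}{2}$ uses more: at a critical point $p$ with $\Delta f(p)\neq 0$ one has a direction in which $f$ is strictly convex (or concave), so every neighborhood of $p$ meets both $\{f>t\}$ and $\{f<t\}$ in a way incompatible with $\partial D(t)$ being locally contained in $\mathcal{C}$. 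For $t=\frac{n}{2}$, the $(n-2)$-dimensional bound from Han--Hardt--Lin immediately gives density.
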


\begin{proof}    From  (\ref{kh12a}),  $\mathcal{C}\cap \{f=\frac n2\}$ is the singular set of the eigenfunction and hence    has locally finite $(n-2)$-dimensional Hausdorff measure  (see Theorem 1.1  in \cite{HHL}). On the other hand,   Lemma 1.1 in \cite{Cold} asserts  that $\mathcal{C}$ in  $\{f(x)> \frac{n}{2}\}$  is locally contained in a smooth $(n-1)$-dimensional manifold. Hence, it  remains to prove the  conclusion in  (a)  for  $\mathcal{C}$   in  $\{f< \frac{n}{2}\}$. This may be done by noticing that   (\ref{kh12a}) implies     $\Delta f>0$ on $\mathcal{C}\cap\{f<\frac{n}{2}\}$  and following the  argument  in \cite{Cold} for $\mathcal{C}\cap\{f>\frac{n}{2}\}$ with the appropriate adaptation.  So  (a) is proved.

 For any  value $c$ satisfying $\{f = c\}\neq \emptyset$,   the  set $\{f = c\} \setminus \mathcal{C}$  is a countable union of $(n-1)$-manifolds. This property together with (a) gives (b).

Now, we confirm (c).  For $t> \frac n2$, it is the assertion (3) in  Lemma 1.1 in \cite{Cold}. For $t< \frac n2$, similar to the proof of  (a),  the assertion in this case follows by using  \eqref{kh12a} and make the corresponding modifications of the argument of  \cite{Cold}. In the case  $t=\frac n2$, the assertion follows from the properties that $\mathcal{C}\cap \{f=\frac n2\}$ has locally finite $(n-2)$-dimensional Hausdorff measure and the  set $\{f = \frac n2\} \setminus \mathcal{C}$   is  locally  a smooth $(n-1)$-manifold.  

Thus, the proof is finished. 
\end{proof}

We recall a result proved  in \cite{Cold} by Colding and Minicozzi. 

\begin{lemma} \cite[Lemma 1.3]{Cold}\label{lem-cm} Suppose that $b$  is a proper $C^n$ function and $\mathcal{H}^n(\{|\nabla b|=0\})=0$ in $\{b\geq r_0\}$ for some fixed $r_0$. If $g$  is a bounded function and $Q(r)=\int_{r_0<b<r}g$, then $Q$ is absolutely continuous and $Q'(r)=\int_{b=r}\frac{g}{|\nabla b|}$ a.e.

\end{lemma}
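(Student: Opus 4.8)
The plan is to deduce the statement from the coarea formula together with the measure hypothesis on the critical set. Write $\Omega_r=\{r_0<b<r\}$, so that $Q(r)=\int_{\Omega_r}g\,dV_g$. Since $b$ is proper, each $\overline{\Omega_r}$ is compact, and because $g$ is bounded these integrals are finite. First I would observe that the hypothesis $\mathcal{H}^n(\{|\nabla b|=0\})=0$ means the critical set is negligible for the volume measure on the $n$-manifold; hence on $\Omega_r$ one may restrict to the regular set $\{|\nabla b|\neq 0\}$ without changing the value of the integral, and the quotient $g/|\nabla b|$ is well defined there.

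Next I would apply the coarea formula to $b$ (the $C^n$ regularity is more than enough; $C^1$, or even Lipschitz, suffices). Taking the test function $h=(g/|\nabla b|)\,\mathbf{1}_{\{|\nabla b|\neq 0\}}\,\mathbf{1}_{\Omega_r}$ in the identity
$$\int_M h\,|\nabla b|\,dV_g=\int_{\mathbb{R}}\left(\int_{b=s}h\,d\mathcal{H}^{n-1}\right)ds$$
and noting that the left-hand side collapses to $\int_{\Omega_r}g\,dV_g=Q(r)$ (because $\{|\nabla b|=0\}$ carries no volume), I obtain
$$Q(r)=\int_{r_0}^{r}\left(\int_{b=s}\frac{g}{|\nabla b|}\,d\mathcal{H}^{n-1}\right)ds.$$
The coarea formula simultaneously guarantees that the inner integral $\psi(s):=\int_{b=s}(g/|\nabla b|)\,d\mathcal{H}^{n-1}$ is well defined and locally integrable in $s$: for almost every $s$ the level set $b^{-1}(s)$ is a smooth hypersurface on which $|\nabla b|\neq 0$, and $\psi\in L^1_{loc}$ by a Fubini-type argument applied to the nonnegative integrand $|g|/|\nabla b|$ (using the boundedness of $g$ to control it).

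Finally, since $Q$ is the indefinite integral of the locally integrable function $\psi$, it is absolutely continuous on compact intervals, and the Lebesgue differentiation theorem gives $Q'(r)=\psi(r)=\int_{b=r}(g/|\nabla b|)\,d\mathcal{H}^{n-1}$ for almost every $r$, which is the assertion.

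The step I expect to require the most care is the passage through the coarea formula, specifically controlling the contribution of the critical set to the level-set integrals. The hypothesis asserts only that $\{|\nabla b|=0\}$ has zero $n$-dimensional measure, so one must argue that for almost every $r$ the piece $b^{-1}(r)\cap\{|\nabla b|=0\}$ contributes nothing to $\psi(r)$; this is precisely what the right-hand side of the coarea formula encodes, since it automatically assigns no mass to the critical values. Everything after this is a routine application of the fundamental theorem of calculus for Lebesgue integrals.
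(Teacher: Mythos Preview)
Your argument via the coarea formula is correct and is the natural route to this statement: the measure-zero hypothesis on the critical set lets you pass to $g/|\nabla b|$ on the regular part, the nonnegative version of coarea applied to $|g|/|\nabla b|$ shows the slice integral is locally $L^1$ in the level parameter, and then the signed version writes $Q$ as an indefinite Lebesgue integral, giving absolute continuity and the a.e.\ derivative formula.

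Note, however, that the paper does not supply its own proof of this lemma at all: it is quoted verbatim as Lemma~1.3 of Colding--Minicozzi and used as a black box. So there is no ``paper's proof'' to compare against here; your coarea argument is essentially the standard justification (and almost certainly what the cited reference does).
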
 

Using  Lemmas \ref{lem-1} and \ref{lem-cm},    the following lemma may be established.

\begin{lemma} \label{lem-1a}
Let $(M^{n},\,g,\, f)$ be an $n$-dimensional complete (not necessarily compact) gradient shrinking Ricci soliton satisfying \eqref{soliton-2}, where $f$ is non-constant. Suppose that $h$ is a bounded measurable function.  Then we have

\begin{enumerate}
\item[(1)]  the set of the critical points of $f$  and each level set of $f$  satisfy $\mathcal{H}^n(\{|\nabla f|=0\})=0$ and   $\mathcal{H}^{n}(\{f=c\})=0$, respectively.
\item[(2)] $ F(t):=\int_{D(t)}hdV_g$ is absolutely continuous and $ F'(t)=\int_{f=t}\frac{g}{|\nabla f|}$ a.e.,
where $D(t)=\{x\in M;\, f(x)<t\}.$

\end{enumerate}

\end{lemma}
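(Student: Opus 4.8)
The plan is to derive both assertions directly from Lemmas \ref{lem-1} and \ref{lem-cm}, once the structural hypotheses on $f$ are checked. First I would record what the soliton equations give about $f$: it is smooth (hence $C^n$), it is non-constant by assumption, and by \eqref{prop2} one has $f = S + |\nabla f|^2$. Since Chen's theorem \cite{chen} guarantees $S \ge 0$ (with $S>0$ unless the soliton is flat), this shows $f \ge 0$, so $f$ is bounded below. Properness of $f$ is automatic when $M$ is compact; in the complete non-compact case it follows from the quadratic growth estimate of Cao and Zhou \cite{MR2732975}, which bounds $f$ from above and below by quadratic functions of the distance and thus forces every sublevel set $D(t)$ to be relatively compact. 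Thus $f$ is a proper, non-constant, $C^n$ function bounded below by $0$.

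With these facts, assertion (1) is immediate. By \eqref{kh12a} the function $f$ satisfies $\Delta_f f = \frac{n}{2} - f$, so Lemma \ref{lem-1}(a) applies and shows that the critical set $\mathcal{C} = \{|\nabla f| = 0\}$ is locally contained in a smooth $(n-1)$-dimensional submanifold. Covering $M$ by countably many charts and using that an $(n-1)$-dimensional submanifold carries zero $n$-dimensional Hausdorff measure yields $\mathcal{H}^n(\{|\nabla f| = 0\}) = 0$. The statement $\mathcal{H}^n(\{f=c\}) = 0$ for every level set is precisely Lemma \ref{lem-1}(b).

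For assertion (2), I would invoke Lemma \ref{lem-cm} with $b = f$ and with the bounded function taken to be $h$. Choose any $r_0 < 0$; since $f \ge 0$, we have $\{f \ge r_0\} = M$, so the required condition $\mathcal{H}^n(\{|\nabla f| = 0\}) = 0$ on $\{f \ge r_0\}$ is exactly what was established in (1). Because $\{f \le r_0\} = \emptyset$, the sublevel integral rewrites as $F(t) = \int_{D(t)} h\, dV_g = \int_{r_0 < f < t} h\, dV_g$, which is the quantity $Q(t)$ of Lemma \ref{lem-cm}. That lemma then delivers both the absolute continuity of $F$ and the coarea-type identity $F'(t) = \int_{f=t} \frac{h}{|\nabla f|}$ for almost every $t$.

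The only point that requires care is this reduction to the form demanded by Lemma \ref{lem-cm}: one must know $f$ is bounded below so that, for $r_0$ chosen below its infimum, the sublevel integral $F(t)$ coincides with the annular integral $Q(t)$ appearing in that lemma. This is where the soliton identity $f = S + |\nabla f|^2$ and the positivity of the scalar curvature enter. Everything else is a formal application of the two preceding lemmas.
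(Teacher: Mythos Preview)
Your proposal is correct and follows essentially the same approach as the paper: verify properness of $f$ via the Cao--Zhou growth estimate, invoke Lemma~\ref{lem-1} for part~(1), and then reduce $F(t)$ to the annular integral of Lemma~\ref{lem-cm} using the lower bound on $f$ coming from $f=S+|\nabla f|^2\ge 0$. The only cosmetic difference is that the paper takes $r_0=f_{\min}$ and removes the bottom level set via Lemma~\ref{lem-1}(b), whereas you take $r_0<0$ so that $\{f\le r_0\}=\emptyset$ directly; your choice is slightly cleaner and avoids needing $S>0$ strictly.
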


\begin{proof} First note that $f$ satisfies \eqref{kh12a}, that is, $$\Delta_f f=\frac{n}{2}-f.$$
Also in  \cite{MR2732975},  Cao and Zhou proved that,  when $M$ is non-compact,  there exists some number $r_0>0$  so that  for all $r(x)\geq r_{0}$, 
\begin{equation}
\label{eqfbeh}
\frac{1}{4}\Big(r(x)-c\Big)^{2}\leq f(x)\leq \frac{1}{4}\Big(r(x)+c\Big)^{2}.
\end{equation} 
This implies that $f$ is proper. So (1) follows from (a) and (b) in Lemma \ref{lem-1}.

Next, we deal with the second assertion.  Indeed, by using that $S>0$ and \eqref{prop2}, we know   $f> 0$.  The properness of $f$ together with $f>0$  implies that $f$ may take the positive minimum $f_{\min}$. This fact and (a) in Lemma \ref{lem-1} imply that 
$$\displaystyle F(t)=\int_{f_{\min}\leq f< t}hdV_g=\int_{f_{\min}<f< t}hdV_g.$$
Thus, (2) follows from (1) and  Lemma \ref{lem-cm}.

\end{proof}

Now we will present the proof of the main results.

\subsection{Proof of Theorem \ref{euler}}

\begin{proof}  If $f$ is constant, then $M^4$ is Einstein and  \eqref {euler-a} implies that  the equality in  (\ref{eq14.5}) holds. So we consider the case  that $f$ is non-constant.

Let $a$ and $b$ be the minimum and maximum of $f$ on $M^4,$ respectively. Then, by using (2) in Lemma \ref{lem-1a}, we obtain that
\begin{eqnarray}\label{eq10} 
\int_{D(s)}\langle \nabla S,\nabla f\rangle dV_{g}&=&\int_a^s\int_{\partial D(t)}\dfrac{\langle \nabla S,\nabla f\rangle}{\vert\nabla f\vert}d\sigma dt\nonumber\\
&=&\int_a^s\int_{\partial D(t)}\langle \nabla S,\nu\rangle d\sigma dt\nonumber\\
&=&\int_a^s\left(\int_{ D(t)}\Delta S\, dV_{g}\right) dt\nonumber\\
&=&\int_a^s\left(\int_{ D(t)}(S+\langle \nabla S,\nabla f\rangle-2\vert Ric\vert^2) dV_{g}\right) dt.
\end{eqnarray} 
 In the second equality of \eqref{eq10},  $\nu$ is the outward  unit normal vector of $\partial D(t)$ and  in the third and fourth equalities, we have used the divergence theorem and   (\ref{eq5}), respectively.\\

\noindent Define the functions $\Phi(s)$ and $\Psi(s)$ by
$$\Phi(s)=\int_a^s\left(\int_{ D(t)}\langle \nabla S,\nabla f\rangle dV_{g}\right) dt$$ and 
$$\Psi(s)=\int_a^s\left(\int_{ D(t)}(S-2\vert Ric\vert^2) dV_{g}\right) dt.$$ 
Hence,  \eqref{eq10} becomes
\begin{equation}\label{eq11}
\Phi'(s)=\Phi(s)+\Psi(s).
\end{equation} 
Differentiating (\ref{eq11}), we get that
\begin{equation}\label{eq11a}
\Phi''(s)=\Phi'(s)+\Psi'(s).
\end{equation} 
Noting $\Phi'(a)=0,$ by \eqref{eq11a},  we obtain that
\begin{align*}
\Phi'(s)&=e^s\int_a^s\Psi'(t)e^{-t}dt.
\end{align*}
Consequently,
\begin{align}
\Phi'(b)&=e^b\int_a^b\Psi'(t)e^{-t}dt\nonumber\\
&=e^b\int_a^b \left(\int_{ D(t)}(S-2\vert Ric\vert^2) dV_{g}\right) e^{-t}dt\nonumber\\
&\leq e^b\int_a^b \left(\int_{ D(t)}(S-\dfrac{1}{2}S^2) dV_{g}\right) e^{-t}dt\nonumber\\
&= e^b\int_a^b \left(\int_{ D(t)}\left(\dfrac{1}{2}-\dfrac{1}{2}(S-1)^2\right) dV_{g}\right) e^{-t}dt\nonumber\\
&\leq \dfrac{1}{2}e^b\int_a^b\,Vol( D(t))\, e^{-t}dt\nonumber\\
&\leq \dfrac{1}{2}\,Vol(M)\, (e^{b-a}-1).\label{ineq-phi}
\end{align} Since $\Phi'(b)=\int_{ M}\langle \nabla S,\nabla f\rangle dV_{g}$, we get
\begin{equation}
\label{mnlk}
\int_{ M}\langle \nabla S,\nabla f\rangle dV_{g}\leq \dfrac{1}{2}Vol( M) (e^{b-a}-1).
\end{equation} Finally, plugging (\ref{mnlk}) into (\ref{eqth}) we conclude
\begin{align}\label{eq14.5a}
8\pi^2 \chi(M)&=\int_{M}|W|^2 dV_{g}+\frac{1}{6}  Vol(M) -\frac{1}{12}\int_{M} \langle \nabla S,\nabla f\rangle dV_{g}\nonumber\\
&\geq  \int_M |W|^2 dV_{g} +\dfrac{1}{24}Vol(M) (5-e^{b-a}),
\end{align} 
 which  is \eqref{eq14.5}. Now consider the case of   the equality in  \eqref{eq14.5}. Suppose, by contrary, that   $f$ is not constant.  Then, $\text{Vol}(D(t))<\text{Vol}(M)$ for any $t<b$.  Therefore the strict inequalities in  \eqref{ineq-phi}  and thus  in  \eqref{eq14.5a} must hold. This is a contradiction. So $f$ must be constant and $M^4$ is Einstein.
 This finishes the proof of the theorem. 
 
\end{proof}

\subsection{Proof of Corollary \ref{proAlg}}

\begin{proof}
Using  (\ref{eq14.5}) and (\ref{eqsig}), we get
\begin{eqnarray}
4\pi^2 \left(2\chi(M)\pm 3\tau(M)\right)&\geq & 2\int_{M} |W^{\pm}|^{2} dV_{g}\nonumber\\&&+\frac{1}{24}Vol(M)\left(5-e^{f_{\max}-f_{\min}}\right). 
\end{eqnarray} Hence, the assumption  $f_{\max}-f_{\min}\leq\log 5$ implies that 
$$\chi(M)\geq \frac{3}{2}|\tau (M)|,$$ as asserted. 
\end{proof}

\subsection{Proof of Theorem \ref{Thm2}}
\begin{proof}
Since $M^4$ has positive scalar curvature, using a result on a compact oriented Riemannian $4$-manifold with  positive scalar curvature, which was proved  by Gursky in \cite{Gursky} (see \cite{MR2198792} also), we know that $M^4$ must satisfy 
\begin{equation}\label{eq-gursky}
8\pi^2\left(\chi(M)-2\right)\leq \int_{M}|W|^2\, dV_{g}.
\end{equation} Moreover, equality holds if and only if $M^4$ is conformally equivalent to a sphere $\Bbb{S}^4.$   By \eqref{eq-gursky} and  (\ref{eq14.5}), we get
\begin{align}\label{eq-vola}
\frac{1}{24}Vol(M)\left(5-e^{f_{\max}-f_{\min}}\right) \leq 16\pi^2,
\end{align} which proves  \eqref{v1}.

 If the equality in \eqref{eq-vola} holds, then $M$  must be Einstein and conformally  equivalent to a sphere $\Bbb{S}^4$. Hence, $f$ is constant,  $\mathring{Ric}=0$, $S=2$, and  $W=0$.  The decomposition of the curvature tensor implies that the sectional curvature of $M$ must be constant $\frac16$.  Thus $M$ must be a standard  sphere $\mathbb{S}^4$ with the radius $\sqrt{6} $.  In particular, such a sphere has volume $\text{Vol}(M)=96\pi^2$.

Next, we deal with the second assertion in the theorem, i.e., the estimate on the Yamabe invariant. Since the $L^2$-norm of the Weyl tensor is conformally invariant in dimension $4,$ one sees that $$\int_{M}\left(\frac{S^2}{24}-\frac{1}{2}|\mathring{Ric}|^2\right) dV_{g}$$ is conformally invariant as well.  Let $\mathcal{Y}(M,[g])^2$ be the Yamabe invariant associated to $(M^4,\,g). $ Then, given the Yamabe metric $\overline{g}\in [g],$ we obtain

\begin{eqnarray}
\label{yam}
\mathcal{Y}(M,[g])^2 &=& \frac{1}{{Vol_{\overline{g}}(M)}} \left(\int_{M}\overline{S} dV_{\overline{g}}\right)^2 \nonumber\\&=& \int_{M}\overline{S}^{2} dV_{\overline{g}}\nonumber\\&\geq & \int_{M}\left(\overline{S}^{2} -12|\mathring{\overline{Ric}}|^2 \right) dV_{\overline{g}}\nonumber\\&=& \int_{M}\left(S^{2} -12|\mathring{Ric}|^2 \right) dV_{g},
\end{eqnarray} where we have used the conformally invariance in the last equality. Moreover, equality holds if and only if $(M^4, \,g)$ is conformally Einstein. 

On the other hand,  it is easy to check from (\ref{plqw}) and (\ref{phn1}) that 
\begin{eqnarray}\label{eq-ric}
\int_{M}|\mathring{Ric}|^2 dV_{g}&=&\int_{M}\left(|Ric|^2 -\frac{S^2}{4}\right) dV_{g}\nonumber\\&=&\frac{1}{2}\int_{M}S^2 dV_{g}-Vol(M)-\int_{M}\frac{S^2}{4} dV_{g}\nonumber\\ &=&\frac{1}{4}\int_{M}S^2 dV_{g} - Vol(M). 
\end{eqnarray}  Plugging \eqref{eq-ric} into (\ref{yam}), one obtains that

\begin{equation}
\mathcal{Y}(M,[g])^2 \geq -2\int_{M}S^2 dV_{g}+ 12 Vol(M).
\end{equation} Hence, we may use (\ref{euler-b}) to get

\begin{equation}
8\pi^2 \chi(M)\leq \int_{M}|W|^2 dV_{g} + \frac{1}{24}\mathcal{Y}(M,[g])^2 
\end{equation} and then it suffices to use  (\ref{eq14.5}) to see that

\begin{equation}\label{eq-vol2a}
Vol(M)\left(5-e^{f_{\max}-f_{\min}}\right)\leq \mathcal{Y}(M,[g])^2, 
\end{equation} as asserted.

Finally, if the equality  in \eqref{eq-vol2a} holds, then the equality in \eqref{eq14.5} must hold. Hence,  Theorem \ref{euler} implies that  $(M,g)$ must be Einstein with $4\text{Vol}(M)=\mathcal{Y}(M,[g])^2$. To prove the inverse, one only needs to note that   the equalities  in \eqref{eq14.5} and  \eqref{yam} hold if $(M, g)$ is Einstein. 

 So, the proof is finished.

\end{proof}

\begin{remark} We point out that  \eqref{v1} can be alternatively obtained  by using  \eqref{v2} as follows.     For any compact $4$-dimensional manifold  $M^4,$ due to Aubin and Schoen, one has the following inequality $ \mathcal{Y}(M^4,[g])^2\leq  \mathcal{Y}(\mathbb{S}^4,[g_0])^2$, where $g_0$ denotes the  metric of standard sphere $\mathbb{S}^4$.  Moreover,  equality holds if and only if $(M, g)$ is  conformally equivalent to a sphere $\Bbb{S}^4.$  At the same time, we have   $ \mathcal{Y}(\mathbb{S}^4,[g_0])^2=384\pi^2$. Hence, \eqref{v1} holds. For the equality case,  the same argument as in the proof of Theorem  \ref{Thm2} can be applied.
\end{remark}

\begin{bibdiv}
\begin{biblist}
\bib{MR2371700}{book}{
   author={Besse, Arthur L.},
   title={Einstein manifolds},
   series={Classics in Mathematics},
   note={Reprint of the 1987 edition},
   publisher={Springer-Verlag, Berlin},
   date={2008},
   pages={xii+516},
   isbn={978-3-540-74120-6},
   review={\MR{2371700}},
}

\bib{MR3275265}{article}{
    AUTHOR = {Brasil, Aldir}, author={Costa, Ezio}, author={Ribeiro Jr, Ernani},
     TITLE = {Hitchin-{T}horpe inequality and {K}aehler metrics for compact
              almost {R}icci soliton},
   JOURNAL = {Ann. Mat. Pura Appl. (4)},
  FJOURNAL = {Annali di Matematica Pura ed Applicata. Series IV},
    VOLUME = {193},
      YEAR = {2014},
    NUMBER = {6},
     PAGES = {1851--1860},
      ISSN = {0373-3114},
   MRCLASS = {53C25 (53C20 53C21 53C65)},
  MRNUMBER = {3275265},
MRREVIEWER = {Andrew Bucki},
       DOI = {},
       URL = {},
}

\bib{Cao1}{article}{
   author={Cao, Huai-Dong},
   title={Existence of gradient K\"{a}hler-Ricci solitons},
   conference={
      title={Elliptic and parabolic methods in geometry},
      address={Minneapolis, MN},
      date={1994},
   },
   book={
      publisher={A K Peters, Wellesley, MA},
   },
   date={1996},
   pages={1--16},
   review={\MR{1417944}},
}

\bib{caoALM11}{article}{
   author={Cao, Huai-Dong},
   title={Recent progress on Ricci solitons},
   journal={Adv. Lect. Math.},
   volume={11},
   date={2010},
   number={2},
   pages={1--38},
   }

   \bib{CaoChen}{article}{
   author={Cao, Huai-Dong},
   author={Chen, Qiang},
   title={On Bach-flat gradient shrinking Ricci solitons},
   journal={Duke Math. J.},
   volume={162},
   date={2013},
   number={6},
   pages={1149--1169},
   issn={0012-7094},
   review={\MR{3053567}},
}
 
 \bib{CRZ}{article}{
   author={Cao, Huai-Dong},
   author={Ribeiro Jr, Ernani},
   author={Zhou, Detang}, 
   title={Four-dimensional complete gradient shrinking Ricci solitons}
   journal={Journal f\"ur die reine und angewandte Mathematik (Crelle's Journal)} 
   volume={2021},
   number={778},
   date={2021},
   pages={127-144},
   doi={10.1515/crelle-2021-0026},
   }

\bib{MR2732975}{article}{
   author={Cao, Huai-Dong},
   author={Zhou, Detang},
   title={On complete gradient shrinking Ricci solitons},
   journal={J. Differential Geom.},
   volume={85},
   date={2010},
   number={2},
   pages={175--185},
   issn={0022-040X},
   review={\MR{2732975}}
   }
   
   \bib{CH}{article}{
   author={Cao, Xiaodong},
   author={Tran, Hung},
   title={The Weyl tensor of gradient Ricci solitons},
   journal={Geom. Topol.},
   volume={20},
   date={2016},
   number={1},
   pages={389--436},
   issn={1465-3060},
   review={\MR{3470717}},
}

\bib{CWZ}{article}{
   author={Cao, Xiaodong},
   author={Wang, Biao},
   author={Zhang, Zhou},
   title={On locally conformally flat gradient shrinking Ricci solitons},
   journal={Commun. Contemp. Math.},
   volume={13},
   date={2011},
   number={2},
   pages={269--282},
   issn={0219-1997},
   review={\MR{2794486}},
}

 \bib{chen}{article}{
   author={Chen, Bing-Long},
   title={Strong uniqueness of the Ricci flow},
   journal={J. Differential Geom.},
   volume={82},
   date={2009},
   number={2},
   pages={363--382},
   issn={0022-040X},
   review={\MR{2520796}},
}

\bib{CZ2021}{article}{
 author={Cheng, Xu}, author={Zhou, Detang}, 
 title={Rigidity of four-dimensional gradient shrinking Ricci soliton}, 
 journal={ArXiv:2105.10744 [math.DG]},
  date={2021},
  }
  

\bib{Cold}{article}{
author={Colding, Tobias}, 
author={Minicozzi, William}, 
title={Optimal growth bounds for eigenfunctions},
journal={ArXiv:2109.04998 [math.DG]},
date={2021},
}

\bib{Derd}{article}{
AUTHOR = {Derdzi\'{n}ski, Andrzej},
     TITLE = {A {M}yers-type theorem and compact {R}icci solitons},
   JOURNAL = {Proc. Amer. Math. Soc.},
  FJOURNAL = {Proceedings of the American Mathematical Society},
    VOLUME = {134},
      YEAR = {2006},
    NUMBER = {12},
     PAGES = {3645--3648},
      ISSN = {0002-9939},
   MRCLASS = {53C20 (53C21)},
  MRNUMBER = {2240678},
       DOI = {},
       URL = {},
}

\bib{ELM}{article}{
   author={Eminenti, Manolo},
   author={La Nave, Gabriele},
   author={Mantegazza, Carlo},
   title={Ricci solitons: the equation point of view},
   journal={Manuscripta Math.},
   volume={127},
   date={2008},
   number={3},
   pages={345--367},
   issn={0025-2611},
   review={\MR{2448435}},
}

\bib{MR2672426}{article}{
   author={Fern\'{a}ndez-L\'{o}pez, Manuel},
   author={Garc\'{\i}a-R\'{\i}o, Eduardo},
   title={Diameter bounds and Hitchin-Thorpe inequalities for compact Ricci
   solitons},
   journal={Q. J. Math.},
   volume={61},
   date={2010},
   number={3},
   pages={319--327},
   issn={0033-5606},
   review={\MR{2672426}},
   doi={},
}

\bib{FLGR0}{article}{
   AUTHOR = {Fern\'{a}ndez-L\'{o}pez, Manuel}, author={Garc\'{\i}a-R\'{\i}o, Eduardo},
     TITLE = {A remark on compact {R}icci solitons},
   JOURNAL = {Math. Ann.},
  FJOURNAL = {Mathematische Annalen},
    VOLUME = {340},
      YEAR = {2008},
    NUMBER = {4},
     PAGES = {893--896},
      ISSN = {0025-5831},
   MRCLASS = {53C25 (53C20)},
  MRNUMBER = {2372742},
MRREVIEWER = {Carlo Mantegazza},
       DOI = {},
       URL = {},
}

\bib{FR}{article}{
   author={Fern\'{a}ndez-L\'{o}pez, Manuel},
   author={Garc\'{\i}a-R\'{\i}o, Eduardo},
   title={On gradient Ricci solitons with constant scalar curvature},
   journal={Proc. Amer. Math. Soc.},
   volume={144},
   date={2016},
   number={1},
   pages={369--378},
   issn={0002-9939},
   review={\MR{3415603}},
   doi={},
}

\bib{Gursky}{article}{
    AUTHOR = {Gursky, Matthew J.},
     TITLE = {Locally conformally flat four- and six-manifolds of positive
              scalar curvature and positive {E}uler characteristic},
   JOURNAL = {Indiana Univ. Math. J.},
  FJOURNAL = {Indiana University Mathematics Journal},
    VOLUME = {43},
      YEAR = {1994},
    NUMBER = {3},
     PAGES = {747--774},
      ISSN = {0022-2518},
   MRCLASS = {53C20 (53C21)},
  MRNUMBER = {1305946},
MRREVIEWER = {Emmanuel Hebey},
       DOI = {},
       URL = {},
}

\bib{Hamilton1}{article}{author={Hamilton, Richard S.},
   title={Three-manifolds with positive Ricci curvature}, journal={J. Differential Geom.},, volume={17}, date={1982}, number={2}, pages={255--306},   review={\MR{664497}},
   }

\bib{Ha1}{article}{
   author={Hamilton, Richard S.},
   title={The formation of singularities in the Ricci flow},
   conference={
      title={Surveys in differential geometry, Vol. II},
      address={Cambridge, MA},
      date={1993},
   },
   book={
      publisher={Int. Press, Cambridge, MA},
   },
   date={1995},
   pages={7--136},
   review={\MR{1375255}},
}

\bib{Hamilton2}{article}{
   author={Hamilton, Richard S.},
   title={The formation of singularities in the Ricci flow},
   conference={
      title={Surveys in differential geometry, Vol. II},
      address={Cambridge, MA},
      date={1993},
   },
   book={
      publisher={Int. Press, Cambridge, MA},
   },
   date={1995},
   pages={7--136},
   review={\MR{1375255}},
}

\bib{HHL}{article}{
    AUTHOR = {Han, Qing},
author={Hardt, Robert},
author={Lin, Fanghua},
     TITLE = {Geometric measure of singular sets of elliptic equations},
   JOURNAL = {Comm. Pure Appl. Math.},
  FJOURNAL = {Communications on Pure and Applied Mathematics},
    VOLUME = {51},
      YEAR = {1998},
    NUMBER = {11-12},
     PAGES = {1425--1443},
      ISSN = {0010-3640},
   MRCLASS = {35J15 (35D99 49Q20)},
  MRNUMBER = {1639155},

}

\bib{Hitchin}{article}{
    AUTHOR = {Hitchin, Nigel},
     TITLE = {Compact four-dimensional {E}instein manifolds},
   JOURNAL = {J. Differential Geometry},
  FJOURNAL = {Journal of Differential Geometry},
    VOLUME = {9},
      YEAR = {1974},
     PAGES = {435--441},
      ISSN = {0022-040X},
   MRCLASS = {53C25},
  MRNUMBER = {350657},
       URL = {http://projecteuclid.org/euclid.jdg/1214432419},
}

\bib{Thorpe}{article}{
AUTHOR = {Thorpe, John A.},
     TITLE = {Some remarks on the {G}auss-{B}onnet integral},
   JOURNAL = {J. Math. Mech.},
    VOLUME = {18},
      YEAR = {1969},
     PAGES = {779--786},
   MRCLASS = {53.72},
  MRNUMBER = {0256307},
}

\bib{MR1249376}{article}{
   author={Ivey, Thomas},
   title={Ricci solitons on compact three-manifolds},
   journal={Differential Geom. Appl.},
   volume={3},
   date={1993},
   number={4},
   pages={301--307},
   issn={0926-2245},
   review={\MR{1249376}},
   doi={10.1016/0926-2245(93)90008-O},
}

\bib{Ivey}{article}{
   author={Ivey, Thomas},
   title={New examples of complete Ricci solitons},
   journal={Proc. Amer. Math. Soc.},
   volume={122},
   date={1994},
   number={1},
   pages={241--245},
   issn={0002-9939},
   review={\MR{1207538}},
}

\bib{KW}{article}{
   author={Kotschwar, Brett},
   author={Wang, Lu},
   title={Rigidity of asymptotically conical shrinking gradient Ricci solitons},
   journal={J. Differential Geom.},
   volume={100},
   date={2015},
   number={1},
   pages={55--108},
   review={\MR{3326574}},
}

\bib{Ko}{article}{
   author={Koiso, Norihito},
   title={On rotationally symmetric Hamilton's equation for K\"{a}hler-Einstein
   metrics},
   conference={
      title={Recent topics in differential and analytic geometry},
   },
   book={
      series={Adv. Stud. Pure Math.},
      volume={18},
      publisher={Academic Press, Boston, MA},
   },
   date={1990},
   pages={327--337},
   review={\MR{1145263}},
   doi={10.2969/aspm/01810327},
}

\bib{MR3128968}{article}{
   author={Ma, Li},
   title={Remarks on compact shrinking Ricci solitons of dimension four},
   language={English, with English and French summaries},
   journal={C. R. Math. Acad. Sci. Paris},
   volume={351},
   date={2013},
   number={21-22},
   pages={817--823},
   issn={1631-073X},
   review={\MR{3128968}},
   doi={},
}

\bib{MW}{article}{
   author={Munteanu, Ovidiu},
   author={Wang, Jiaping},
   title={Geometry of shrinking Ricci solitons},
   journal={Compos. Math.},
   volume={151},
   date={2015},
  number={12},
   pages={2273--2300},
   issn={0010-437X},
   review={\MR{3433887}},
}

\bib{MW2}{article}{
   author={Munteanu, Ovidiu},
   author={Wang, Jiaping},
   title={Positively curved shrinking Ricci solitons are compact},
   journal={J. Differential Geom.},
   volume={106},
   date={2017},
   number={3},
   pages={499--505},
   issn={0022-040X},
   review={\MR{3680555}},
}

\bib{Naber}{article}{
   author={Naber, Aaron},
   title={Noncompact shrinking four solitons with nonnegative curvature},
   journal={J. Reine Angew. Math.},
   volume={645},
   date={2010},
   pages={125--153},
   issn={0075-4102},
   review={\MR{2673425}},
}

\bib{Perelman2}{article}{author={Perelman, Grisha}, title={Ricci flow with surgery on three manifolds}, journal={ArXiv:math.DG/0303109}, date={},}

\bib{Perelman0}{article}{author={Perelman, Grisha}, title={The entropy formula for the Ricci flow and its geometric applications}, journal={ArXiv:math/0211159 [math.DG]}, date={},}

\bib{MR2507581}{article}{
   author={Petersen, Peter},
   author={Wylie, William},
   title={Rigidity of gradient Ricci solitons},
   journal={Pacific J. Math.},
   volume={241},
   date={2009},
   number={2},
   pages={329--345},
   issn={0030-8730},
   review={\MR{2507581}},
   doi={10.2140/pjm.2009.241.329},
}

\bib{PW}{article}{
   author={Petersen, Peter},
   author={Wylie, William},
   title={On the classification of gradient Ricci solitons},
   journal={Geom. Topol.},
   volume={14},
   date={2010},
   number={4},
   pages={2277--2300},
   issn={1465-3060},
   review={\MR{2740647}},
   doi={10.2140/gt.2010.14.2277},
}

\bib{MR2198792}{article}{
   author={Seshadri, Harish},
   title={Weyl curvature and the Euler characteristic in dimension four},
   journal={Differential Geom. Appl.},
   volume={24},
   date={2006},
   number={2},
   pages={172--177},
   issn={0926-2245},
   review={\MR{2198792}},
   doi={10.1016/j.difgeo.2005.08.008},
}

\bib{Tadano}{article}{
AUTHOR = {Tadano, Homare},
     TITLE = {An upper diameter bound for compact {R}icci solitons with
              application to the {H}itchin-{T}horpe inequality. {II}},
   JOURNAL = {J. Math. Phys.},
  FJOURNAL = {Journal of Mathematical Physics},
    VOLUME = {59},
      YEAR = {2018},
    NUMBER = {4},
     PAGES = {043507, 3},
      ISSN = {0022-2488},
   MRCLASS = {53C25},
  MRNUMBER = {3787338},
MRREVIEWER = {Ramiro Augusto Lafuente},
       DOI = {},
       URL = {},
}

\bib{WZ1}{article}{
author={Wang, Xu-Jia},
author={Zhu, Xiaohua},
title={K\"ahler-Ricci solitons on toric manifolds with positive first Chern class},
journal={Advances in Mathematics},
volume={188},
number={1},
date={2004},
pages={87-103},
}


\bib{ZZ}{article}{
 AUTHOR = {Zhang, Yuguang}, aurtho={Zhang, Zhenlei},
     TITLE = {A note on the {H}itchin-{T}horpe inequality and {R}icci flow
              on 4-manifolds},
   JOURNAL = {Proc. Amer. Math. Soc.},
  FJOURNAL = {Proceedings of the American Mathematical Society},
    VOLUME = {140},
      YEAR = {2012},
    NUMBER = {5},
     PAGES = {1777--1783},
      ISSN = {0002-9939},
   MRCLASS = {53C20 (53C44)},
  MRNUMBER = {2869163},
MRREVIEWER = {Yu Zheng},
       DOI = {},
       URL = {},
}

\end{biblist}
\end{bibdiv}

\end{document}